\documentclass[a4paper, 11pt]{amsart}
\usepackage{latexsym}
\usepackage{tikz-cd}

\usepackage{xcolor}
\usepackage[normalem]{ulem}

\def\Z{\mathbb{Z}}

\def\Q{\mathrm{Q}}

\def\H{\mathrm{H}}

\def\E{\mathcal{E}}

\usepackage{mathtools}

\usepackage{geometry}
\geometry{
inner=4cm,
outer=3.5cm,
}

\newtheorem{prop}{Proposition}[section]
\newtheorem{lem}[prop]{Lemma}

\newtheorem{thm}[prop]{Theorem}

\theoremstyle{definition}
\newtheorem{rem}[prop]{Remark}
\newtheorem{defi}[prop]{Definition}
\newtheorem{ex}[prop]{Example}

\begin{document}

\title[Highly symmetric Archdeacon embeddings]{A class of highly symmetric\\ Archdeacon embeddings}

\author[S. Costa]{Simone Costa}
\address{DICATAM - Sez. Matematica, Universit\`a degli Studi di Brescia, Via
Branze 43, I-25123 Brescia, Italy}
\email{simone.costa@unibs.it}
\author[L. Mella]{Lorenzo Mella}
\address{Dip. di Scienze Fisiche, Informatiche, Matematiche, Universit\`a degli Studi di Modena e Reggio Emilia, Via Campi 213/A, I-41125 Modena, Italy}
\email{lorenzo.mella@unipr.it}

\begin{abstract}
Archdeacon, in his seminal paper \cite{A}, defined the concept of Heffter array to provide explicit constructions of biembeddings of the complete graph $K_v$ into orientable surfaces, the so-called Archdeacon embeddings, and proved that these embeddings are $\mathbb{Z}_{v}$-regular.

In this paper, we show that an Archdeacon embedding may admit an automorphism group that is strictly larger than $\mathbb{Z}_{v}$. Indeed, as an application of the interesting class of arrays recently introduced by Buratti in \cite{B}, we exhibit, for infinitely many values of $v$, an embedding of this type having full automorphism group of size ${v \choose 2}$ that is the largest possible one.
\end{abstract}

\keywords{Heffter array, Archdeacon Embedding, Automorphism Group}
\subjclass[2010]{05B20, 05C10, 05C60}

\maketitle

\section{Introduction}
An $m\times n$ partially filled (p.f., for short) array on a set $\Omega$ is an $m \times n$ matrix whose elements belong to $\Omega$
and where some cells can be empty. In 2015, Archdeacon (see \cite {A}), introduced a class of p.f. arrays that have been extensively studied:
the \emph{Heffter arrays}.
\begin{defi}\label{def:H}
A \emph{Heffter array} $\H(m,n; h,k)$ is an $m \times n$ p. f. array with entries in $\Z_{2nk+1}$ such that:
\begin{itemize}
\item[(\rm{a})] each row contains $h$ filled cells and each column contains $k$ filled cells,
\item[(\rm{b})] for every $x\in \Z_{2nk+1}\setminus\{0\}$, either $x$ or $-x$ appears in the array,
\item[(\rm{c})] the elements in every row and column sum to $0$ (in $\Z_{2nk+1}$).
\end{itemize}
\end{defi}
These arrays have been introduced because of their vast variety of applications and links to other problems and concepts, such as orthogonal cycle decompositions and $2$-colorable embeddings (briefly \emph{biembeddings}), see for instance \cite{A, CDY, CMPPHeffter, DM}. The existence problem of Heffter arrays has been also deeply investigated: we refer to the survey \cite{DP} for the known results in this direction.
This paper will focus mainly on the connection between p.f. arrays and embeddings. To explain this link, we first recall some basic definitions, see \cite{Moh, MT}.
\begin{defi}
Given a graph $\Gamma$ and a surface $\Sigma$, an \emph{embedding} of $\Gamma$ in $\Sigma$ is a continuous injective mapping $\psi: \Gamma \rightarrow \Sigma$, where $\Gamma$ is viewed with the usual topology as a $1$-dimensional simplicial complex.
\end{defi}
The connected components of $\Sigma \setminus \psi(\Gamma)$ are called $\psi$-\emph{faces}. Also, with a slight abuse of notation, we say that a circuit $F$ of $\Gamma$ is a face (induced by the embedding $\psi$) if $\psi(F)$ is the boundary of a $\psi$-face. Then, if each $\psi$-face is homeomorphic to an open disc, the embedding $\psi$ is said to be \emph{cellular}.
In this context, we say that two embeddings $\psi: \Gamma \rightarrow \Sigma$ and $\psi': \Gamma' \rightarrow \Sigma'$ are \emph{isomorphic} whenever there exists a graph isomorphism $\sigma: \Gamma\rightarrow \Gamma'$ such that $\sigma(F)$ is a $\psi'$-face if and only if $F$ is a $\psi$-face. Here we say that $\sigma$ is an embedding isomorphism or, if $\psi=\psi'$, an embedding automorphism.

Archdeacon, in his seminal paper \cite{A}, showed that, if some additional technical conditions are satisfied, Heffter arrays provide explicit constructions of $\mathbb{Z}_{v}$-regular biembeddings of complete graphs $K_v$ into orientable surfaces. Here \emph{$\mathbb{Z}_{v}$-regular} means that $\mathbb{Z}_v$ acts sharply regularly on the vertex set, hence $\mathbb{Z}_v$ is contained in the automorphism group of $\psi$, denoted by $Aut(\psi)$.
Following \cite{Costa}, the embeddings defined using this construction via p. f. arrays will be denoted as \emph{embeddings of Archdeacon type} or, more simply, \emph{Archdeacon embeddings}.

Indeed, this kind of embedding can be considered also for variations of the concept of Heffter arrays, such as the \emph{non-zero sum Heffter arrays} discussed in \cite{CostaDellaFiorePasotti, PM, MT1} (see also \cite{DP} for other variations and generalizations). In \cite{Costa}, the author provided a generalization of both the \emph{Heffter} and the \emph{non-zero sum Heffter} arrays and showed that the Archdeacon embedding can be also defined in this more general context. Given $v=2nk+1$ and a group $G$ of order $v$, he defined the so-called \emph{quasi}-\emph{Heffter array $A$ over $G$}\footnote{In \cite{Costa} it was defined the concept of quasi-Heffter only in the case $G=\Z_v$ but, following \cite{CPPBiembeddings}, we provide the definition for generic groups.} (denoted as $\Q\H(m,n; h,k)$) by considering an $m\times n$ p.f. array with elements in $G$ such that:
\begin{itemize}
\item[($\rm{a_1})$] each row contains $h$ filled cells and each column contains $k$ filled cells,
\item[($\rm{b_1})$] the multiset $[\pm x \mid x \in A]$ contains each element of $G\setminus \{0\}$ exactly once.
\end{itemize}
If every row and every column of a quasi-Heffter sum to zero, consistently with the classical Heffter arrays nomenclature, we denote this array by $\H(m,n; h,k)$ over $G$ and, if also $m=k$ and $n=h$, by $\H(m,n)$ over $G$.

Using such arrays, the Archdeacon embedding into orientable surfaces is still well defined, with essentially the same construction of \cite{A}.
Then, again in \cite{Costa}, it was proved that for an embedding constructed from a $\Q\H(m,n; h,k)$ over $\Z_v$, its full automorphism group, which acts sharply transitively on the vertex set, is \textbf{almost always} exactly $\mathbb{Z}_{v}$. Note that several examples of $\Z_v$-regular embeddings whose full automorphism group is larger than $\Z_v$ are known (see for instance Example 3.4 of \cite{Costa} or \cite{CFP}), but none of them arises from the Archdeacon construction. For this reason, we find it natural to investigate this existence problem. The main result here presented is indeed the existence of an infinite family of such embeddings whose automorphism groups are strictly larger than $\Z_v$.

In Section $2$ we will provide a formal definition of the Archdeacon embedding, starting from a Heffter array over a group $G$. Then, in Section 3 we will analyze one of the interesting classes of Heffter arrays over $\mathbb{F}_q$ recently introduced by Buratti in \cite{B}. We will show that using these arrays it is possible to obtain infinitely many embeddings of Archdeacon type with a rich automorphism group: more precisely the stabilizer of a given point in the automorphism group has size $(v-1)/2$ and we will show that this is the largest possible one for such embeddings. Moreover, since $\mathbb{F}_p=\Z_p$ (assuming $p$ is a simple prime), infinitely many of these embeddings arise from classical Heffter arrays.
\section{The Archdeacon Embedding and its Automorphisms}
Following \cite{GG,GT, JS}, we provide an equivalent, but purely combinatorial, definition of a graph embedding into a surface.
Here we denote by $D(\Gamma)$ the set of all the oriented edges of the graph $\Gamma$ and, given a vertex $x$ of $\Gamma$, $N(\Gamma,x)$ denotes the set of vertices adjacent to $x$ in $\Gamma$.
\begin{defi}\label{DefEmbeddings}
Let $\Gamma$ be a connected multigraph. A \emph{combinatorial embedding} of $\Gamma$ (into an orientable surface) is a pair $\Pi=(\Gamma,\rho)$ where $\rho: D(\Gamma)\rightarrow D(\Gamma)$ satisfies the following properties:
\begin{itemize}
\item[(a)] for any $y\in N(\Gamma,x)$, there exists $y'\in N(\Gamma,x)$ such that $\rho(x,y)=(x,y')$,
\item[(b)] we define $\rho_x$ as the permutation of $N(\Gamma,x)$ such that, given $y\in N(\Gamma,x)$, $\rho(x,y)=(x,\rho_x(y))$. Then the permutation $\rho_x$ is a cycle of order $|N(\Gamma,x)|$.
\end{itemize}
If properties $(a)$ and $(b)$ hold, the map $\rho$ is said to be a \emph{rotation} of $\Gamma$.
\end{defi}
Then, as reported in \cite{GG}, a combinatorial embedding $\Pi=(\Gamma,\rho)$ is equivalent to a cellular embedding $\psi$ of $\Gamma$ into an orientable surface $\Sigma$ (see also \cite{A}, Theorem 3.1).

Now we recall the definition of Archdeacon embedding. Here we report it in the case of Heffter arrays over a group $G$, but we remark that the same embedding is still well-defined also for quasi-Heffter arrays (see \cite{Costa}, Definition 2.4). We first introduce some notation.
The rows and the columns of an $m\times n$ array $A$ are denoted by $R_1,\ldots, R_m$ and by $C_1,\ldots, C_n$, respectively. Also, by $\E(A)$, $\E(R_i)$, $\E(C_j)$ we mean the list of the elements of the filled cells of $A$, of the $i$-th row and of the $j$-th column, respectively. Given an $m\times n$ p.f. array $A$, by $\omega_{R_i}$ and $\omega_{C_j}$ we denote respectively an ordering of $\E(R_i)$ and $\E(C_j)$, and we define by $\omega_r=\omega_{R_1}\circ \cdots \circ \omega_{R_m}$ the ordering for the rows and by $\omega_c=\omega_{C_1}\circ \cdots \circ \omega_{C_n}$ the ordering for the columns.
\begin{defi}\label{Compatible}
Given a Heffter array $A$, the orderings $\omega_r$ and $\omega_c$ are said to be \emph{compatible} if $\omega_c \circ \omega_r$ is a cycle of order $|\E(A)|$.
\end{defi}

Now we are ready to recall the definition of Archdeacon embedding, see \cite{A}.
\begin{defi}\label{ArchdeaconEmbedding}
Let $A$ be an $\H(m,n;h,k)$ over the group $G$ that admits two compatible orderings $\omega_r$ and $\omega_c$. Let $\rho_0$ be the following permutation on
$\pm \E(A)=G\setminus \{0\}$:
\begin{eqnarray}\label{ArchEmb}
\rho_0(a)&=&\begin{cases}
-\omega_r(a)\mbox{ if } a\in \E(A),\\
\omega_c(-a)\mbox{ if } a\in -\E(A).\\
\end{cases}
\end{eqnarray}
Let $\rho$ be the map on the set of the oriented edges of $K_v$ defined as follows
\begin{eqnarray}\label{ArchRho}
\rho((x,x+a))&=& (x,x+\rho_0(a)).
\end{eqnarray}
Then, the pair $\Pi = (K_v, \rho)$ is said to be an \textit{Archdeacon embedding}.
\end{defi}
Indeed, Archdeacon proved in \cite{A} that, since the orderings $\omega_r$ and $\omega_c$ are compatible, the map $\rho$ is a rotation of $K_{v}$ (he considered the case $G=\Z_v$ but, as noted in \cite{CPPBiembeddings}, the same proof holds in general), and thus the pair $(K_v,\rho)$ is a combinatorial embedding of $K_v$. More precisely he showed the following theorem.

\begin{thm}\label{HeffterBiemb} Let $A$ be an $\H(m,n;h,k)$ over the group $G$ that admits two compatible orderings $\omega_r$ and $\omega_c$. Then there exists a biembedding $\Pi$ of $K_{2nk+1}$, such that every edge is on a face whose boundary length is $h$ and on a face whose boundary length is $k$.

Moreover, $\Pi$ admits $G$ as a sharply transitive automorphism group.
\end{thm}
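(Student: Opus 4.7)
The plan is to verify the three assertions in turn: that $(K_v,\rho)$ is a well-defined combinatorial embedding, that its faces decompose into row-faces of length $h$ and column-faces of length $k$ realizing a biembedding, and that $G$ acts sharply transitively by automorphisms. For the first point, since $\rho((x,x+a))=(x,x+\rho_0(a))$, the vertex rotation $\rho_x$ at each $x$ is conjugate via $y\mapsto y-x$ to $\rho_0$ acting on $G\setminus\{0\}=\pm\E(A)$. By Definition \ref{DefEmbeddings}(b) it suffices to show that $\rho_0$ is a single cycle of length $|G|-1=2nk$. The formula for $\rho_0$ swaps $\E(A)$ and $-\E(A)$, so $\rho_0^2$ preserves each half; a direct computation yields $\rho_0^2(a)=\omega_c(\omega_r(a))$ for $a\in\E(A)$. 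By compatibility, $\omega_c\circ\omega_r$ is a single cycle of length $|\E(A)|=nk$, and hence $\rho_0$ is a single $2nk$-cycle on $G\setminus\{0\}$.

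For the face structure, I would trace the face-permutation $\phi=\tau\rho$, where $\tau$ is the edge-reversal $(x,y)\mapsto(y,x)$. In the parametrization $(x,a)$ with $a=y-x$, one computes $\phi(x,a)=(x+\rho_0(a),-\rho_0(a))$. If $a\in\E(A)$, this reads $(x-\omega_r(a),\omega_r(a))$, so iteration cycles the difference through the row of $A$ containing $a$ while shifting the base vertex by the negative partial row-sum; after $h$ steps the row is exhausted and, because the full row-sum vanishes in $G$, the trace closes at $(x,a)$. This gives row-faces of length $h$. The symmetric argument using $\omega_c$ for $a\in-\E(A)$ produces column-faces of length $k$. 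Since the two orientations of any edge $\{x,x+a\}$ carry opposite differences $a$ and $-a$, each edge lies on exactly one $h$-face and one $k$-face, giving the $2$-coloring of faces characteristic of a biembedding.

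Finally, for every $g\in G$ the translation $\sigma_g:x\mapsto x+g$ is a graph automorphism of $K_v$; since $\rho$ depends only on the edge-difference $a=y-x$, the identity $\rho\circ\sigma_g=\sigma_g\circ\rho$ holds on $D(K_v)$, so $\sigma_g\in Aut(\Pi)$ and $g\mapsto\sigma_g$ embeds $G$ as a sharply transitive subgroup on the vertex set $G$. The main obstacle I anticipate is the combinatorial bookkeeping in the second step: one must carefully track how the two-stage alternation of $\rho_0$ between $\E(A)$ and $-\E(A)$ propagates under $\phi$, so that the face-cycles close after exactly $h$ (respectively $k$) iterations by virtue of the vanishing row (respectively column) sums. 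The compatibility condition is crucial already in the first step to ensure $\omega_c\circ\omega_r$ is a single cycle; without it $\rho_0$ would split into several cycles and $\rho$ would fail to define a valid rotation.
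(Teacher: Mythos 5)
Your proposal is correct and reconstructs essentially the argument of Archdeacon that the paper merely cites (the paper does not reprove this theorem; it only records the resulting faces in \eqref{F1} and \eqref{F2}): compatibility makes $\rho_0$ a single $2nk$-cycle so that $\rho$ is a rotation, the face-tracing permutation yields row-faces and column-faces that close after exactly $h$ and $k$ steps because the row and column sums vanish, and translations commute with $\rho$ since it depends only on edge-differences. The only cosmetic difference is your face-tracing convention, which attaches the oriented edge $(x,x+a)$ with $a\in\E(A)$ to the $h$-face, whereas the paper's \eqref{F1} attaches that orientation to the $k$-face; this is immaterial because the underlying undirected edge lies on one face of each length in either convention.
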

He also described the faces induced by the Archdeacon embedding under the conditions of Theorem \ref{HeffterBiemb}.

For this purpose, we take a p.f. array $A$ that is an $\H(m,n;h,k)$ admitting two compatible orderings $\omega_r$ and $\omega_c$. We consider the oriented edge $(x,x+a)$ with $a \in \E(A)$. Due to Theorem 1.1 of \cite{A}, the directed edge $(x,x+a)$ belongs to the face $F_1$ whose boundary is
\begin{equation}\label{F1}\left(x,x+a,x+a+\omega_c(a),\ldots,x+\sum_{i=0}^{k-1} \omega_c^i(a)\right).\end{equation}
Let us now consider the oriented edge $(x,x+a)$ with $a\not \in \E(A)$.
In this case, $(x,x+a)$ belongs to the face $F_2$ whose boundary is
\begin{equation}\label{F2}\left(x,x+\sum_{i=1}^{h-1}\omega_{r}^{-i}(-a),x+\sum_{i=1}^{h-2}\omega_{r}^{-i}(-a),
\dots,x+\omega_{r}^{-1}(-a)\right).\end{equation}
A priori these faces are circuits but, under suitable conditions, we can prove that they are simple cycles. To be more precise we need to introduce some further definitions.

Given a finite subset $T$ of an abelian group $G$ and an ordering $\omega = (t_1, t_2,\dots, t_k)$ of the elements in $T$,  for any $i\in [1, k]$ let $s_i =\sum_{j=1}^i t_j$ be the $i$-th partial sum of $\omega$. The ordering $\omega$ is said to be \emph{simple}\ if $s_b\not=s_c$ for any $1\leq b,c\leq k$.
Given an $m\times n$ p.f. array $A$ whose entries belong to a given group $G$, and given an ordering $\omega_{C_i}$ for any column $C_i$ where $1\leq i\leq n$ and an ordering $\omega_{R_j}$ for any row $R_j$ where $1\leq j\leq m$, we say that $\omega_r=\omega_{R_1}\circ \cdots\circ\omega_{R_m}$ and $\omega_c=\omega_{C_1}\circ \cdots\circ\omega_{C_n}$ are \emph{simple} if each $\omega_{C_i}$ and $\omega_{R_j}$ is simple. If the natural orderings, from top to bottom for each column and from left to right for each row are simple we say that the array is \emph{globally simple}. We can restate the main result of Archdeacon \cite{A} as follows:
\begin{thm}\label{HeffterBiemb} Let $A$ be an $\H(m,n;h,k)$ over the group $G$ that admits two compatible and simple orderings $\omega_r$ and $\omega_c$. Then there exists a biembedding $\Pi$ of $K_{2nk+1}$ such that every edge is on a face whose boundary is an $h$-cycle (i.e. on a simple face whose length is $h$) and on a face whose boundary is a $k$-cycle (i.e. on a simple face whose length is $k$).

Moreover, $\Pi$ admits $G$ as a sharply transitive automorphism group.
\end{thm}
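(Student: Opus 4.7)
The plan is to bootstrap from the earlier (non-simple) version of Theorem~\ref{HeffterBiemb}, which already supplies the biembedding $\Pi=(K_{2nk+1},\rho)$ with face-boundary lengths $h$ and $k$, together with the sharply transitive $G$-action. The only new content is that, under the simplicity hypothesis, each face boundary is actually a simple cycle rather than merely a closed walk. I would make this upgrade by reading off the vertex sequences from the explicit descriptions \eqref{F1} and \eqref{F2} and showing that, under simplicity, no vertex is repeated.

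Concretely, fix an edge $(x,x+a)$ with $a\in\E(A)$ and let $C_{j_0}$ be the column containing $a$. Since $\omega_c=\omega_{C_1}\circ\cdots\circ\omega_{C_n}$ and the $\omega_c$-orbit of $a$ is exactly $\E(C_{j_0})$, the iterates $\omega_c^i(a)$ for $i=0,\dots,k-1$ are precisely the elements of $\omega_{C_{j_0}}$ read cyclically, starting from $a$. The face $F_1$ given by \eqref{F1} therefore has vertex list $x+s_j$, where $s_j=\sum_{i=0}^{j-1}\omega_c^i(a)$ is the $j$-th partial sum of this rotated ordering. A short computation (using the fact that the column sums to $0$) shows that the partial sums of any cyclic rotation of a zero-sum ordering are translates, by a common constant, of the partial sums of the original ordering. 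Thus simplicity of $\omega_{C_{j_0}}$ is inherited by the rotation, the $s_j$ are pairwise distinct, and $F_1$ is a simple $k$-cycle.

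The case of an edge $(x,x+a)$ with $a\notin\E(A)$ is entirely symmetric. Here $-a$ lies in some row $R_{i_0}$, and the vertex list of $F_2$ produced by \eqref{F2} consists of translates of $x$ by partial sums of a cyclic rotation of $\omega_{R_{i_0}}$ applied to $-a$. By the same rotation-invariance of simplicity, these vertices are distinct, so $F_2$ is a simple $h$-cycle. Since every face of $\Pi$ arises as an $F_1$ or an $F_2$, every face boundary is a cycle of the correct length; combining this with the sharply transitive action of $G$ already furnished by the earlier theorem finishes the proof.

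The only non-trivial step is the rotation-invariance of simplicity for zero-sum orderings; once that is in hand, the argument is essentially bookkeeping on top of Archdeacon's original construction, so I do not anticipate any serious obstacle beyond carefully matching indices between the cyclic iterates $\omega_c^i(a)$ (resp.\ $\omega_r^{-i}(-a)$) and the fixed orderings $\omega_{C_{j_0}}$ (resp.\ $\omega_{R_{i_0}}$).
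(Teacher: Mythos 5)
Your proposal is essentially correct and follows exactly the route the paper intends: the paper states this theorem as a restatement of Archdeacon's result without proof, but its surrounding discussion (the explicit face boundaries \eqref{F1} and \eqref{F2} together with the remark that ``a priori these faces are circuits but, under suitable conditions, we can prove that they are simple cycles'') is precisely your upgrade of the non-simple version via distinctness of partial sums. One small point to tidy: for the face $F_2$ the iterates are $\omega_r^{-i}(-a)$, so the relevant sequence is a cyclic rotation of the \emph{reversal} of $\omega_{R_{i_0}}$, and you need reversal-invariance of simplicity for zero-sum orderings in addition to rotation-invariance; this follows from the same one-line computation (the partial sums of the reversed ordering are $-s_{k-1},-s_{k-2},\dots$), so it is a bookkeeping remark rather than a gap.
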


Now, in order to investigate the automorphism group of an embedding of Archdeacon type, we revisit the necessary conditions stated in \cite{Costa}.

We first recall that also the notions of embedding isomorphism and automorphism can be defined purely combinatorially as follows (see Korzhik and Voss \cite{Korzhik}, page 61).
\begin{defi}\label{DefEmbeddingsIs}
Let $\Pi:= (\Gamma,\rho)$ and $\Pi':= (\Gamma',\rho')$ be two combinatorial embeddings of $\Gamma$ and $\Gamma'$, respectively. We say that $\Pi$ is \emph{isomorphic} to $\Pi'$ if there exists a graph isomorphism $\sigma: \Gamma\rightarrow \Gamma'$ such that, for any $(x,y)\in D(\Gamma)$, we have either
\begin{equation}\label{eq11}
\sigma\circ \rho(x,y)=\rho'\circ \sigma(x,y)
\end{equation}
or
\begin{equation}\label{eq12}
\sigma\circ \rho(x,y)=(\rho')^{-1}\circ \sigma(x,y).
\end{equation}
We also say, with abuse of notation, that $\sigma$ is an \emph{embedding isomorphism} between $\Pi$ and $\Pi'$.
Moreover, if equation (\ref{eq11}) holds, $\sigma$ is said to be an \emph{orientation preserving isomorphism} while,
if (\ref{eq12}) holds, $\sigma$ is said to be an \emph{orientation reversing isomorphism}.
\end{defi}

Let $\Pi$ be an Archdeacon embedding of $K_{v}$ obtained from a Heffter array $A$. Then, we denote by $Aut(\Pi)$ the group of all automorphisms of $\Pi$, and by $Aut^+(\Pi)$ we mean its subgroup of orientation-preserving automorphisms. Similarly, by $Aut_0(\Pi)$ and $Aut_0^+(\Pi)$ we respectively denote the subgroups of $Aut(\Pi)$ and of $Aut^+(\Pi)$ of automorphisms that fix $0$.

Here we have that $Aut^+(\Pi)$ and $Aut_0^+(\Pi)$ are, respectively, normal subgroups of $Aut(\Pi)$ and $Aut_0(\Pi)$ (their index is either $1$ or $2$). Moreover, if for every $g \in G$ we denote by $\tau_g$ the translation action by $g$, i.e. $\tau_g(x) = x+g$, it has been proved by Archdeacon (see \cite{A}, Theorem 1.1) that $\tau_g \in Aut^+(\Pi)$, hence $G$ is a subgroup of $Aut^+(\Pi)$.
Thus, the automorphism group of an embedding of Archdeacon type is never trivial. On the other hand, in the case $G=\Z_v$ it has been proven in \cite{Costa} that, if we start from a quasi-Heffer array, $Aut(\Pi)$ is almost always exactly $\Z_v$. Hence we find it natural to investigate whether, given a biembedding of Archdeacon type $\Pi$, the only automorphism of $\Pi$ fixing $0$ must be the identity.

Now, we recall some necessary conditions (see \cite{Costa}) that have to be satisfied by an automorphism of $\Pi$ that fixes zero.
\begin{rem} \label{rem:group}
Let $\Pi=(K_v,\rho)$ be a biembedding of Archdeacon type, where $v=2nk+1$. Then, $\sigma \in Aut_0(\Pi)$ (resp. $Aut_0^+(\Pi)$) acts on $G\setminus \{0\}$ as an element of the dihedral group $\mathrm{Dih}_{2nk}$ (resp. the cyclic group $\Z_{2nk}$).

More precisely, set $\rho_0=(x_1,x_2,\dots,x_{2nk})$, and by reading the indices modulo $2nk$, we have that:
\begin{itemize}
\item given $\sigma\in Aut_0^+(\Pi)$,
$$\sigma|_{K_v\setminus\{0\}}=\rho_0^\ell\mbox{ for some }\ell \in \{1,\dots,2nk\};$$
\item given $\sigma\in Aut_0^-(\Pi)$,$$\sigma(x_j)=x_{\ell-j}\mbox{ for some }\ell \in \{1,\dots,2nk\}.$$
\end{itemize}
\end{rem}

\section{A class of highly symmetric embeddings}
In this section we provide for infinitely many values of $q$ (that here is a prime power) an embedding of Archdeacon kind, defined over the additive group of $\mathbb{F}_q$ (the field of order $q$), whose full automorphism group is of size ${q\choose 2}$: more precisely this embedding admits $\mathbb{F}_q$ as a regular automorphism group and the stabilizer of $0$ is isomorphic to $\Z_{(q-1)/2}$.

We begin by recalling some definitions introduced in \cite{B}.
Given a prime power of the form $q=2mn+1$, an $H(m,n)$ over $\mathbb{F}_q$ is \textit{rank-one} if every row is a multiple of a suitable non-zero vector $X = (x_1, \dotsc, x_n)$ of $\mathbb{F}_q^n$. Note that this implies that every column of the array is a multiple of a suitable non-zero vector $Y = (y_1,\dotsc,y_m)$ of $\mathbb{F}_q^m$.

In the same paper, Buratti provided a construction of rank-one Heffter arrays over $\mathbb{F}_q$, where $q=2mn+1$ is a prime power in several cases: we consider here his constructions when $m$ and $n$ are odd coprime integers.

In this case, given $\xi$ of order $n$ and $\epsilon$ of order $m$ in $\mathbb{F}_q^*=\mathbb{F}_q\setminus \{0\}$, after considering the vectors $X$ and $Y$
\[
\begin{aligned}
X &= (1,\xi,\xi^2,\xi^3, \dotsc, \xi^{n-1}), \\
Y &= (1,\epsilon,\epsilon^2,\epsilon^3, \dotsc, \epsilon^{m-1}),\\
\end{aligned}
\]
and the $m\times n$ array $A_{m,n}=(a_{i,j})$ whose $(i,j)$-cell is $a_{i,j}:=\epsilon^{i-1}\xi^{j-1}$, he proved the following (see \cite{B}, Theorem 5.2):
\begin{thm}\label{lem:glob_simple}\label{B1}
Let $q = 2mn+1$ be a prime power with $m,n$ odd and coprime. Then the array $A_{m,n}$ is a rank-one globally simple $H(m,n)$ over $\mathbb{F}_q$.
\end{thm}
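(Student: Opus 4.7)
The plan is to verify the three Heffter-array axioms, the rank-one property, and global simplicity for $A_{m,n}$, using only the multiplicative orders of $\epsilon$ and $\xi$. The rank-one property and axiom $(\mathrm{a_1})$ are immediate from $a_{i,j}=\epsilon^{i-1}\xi^{j-1}$: the $i$-th row equals $\epsilon^{i-1}X$, the $j$-th column equals $\xi^{j-1}Y$, and every cell is filled, so each row has $h=n$ filled cells and each column has $k=m$.

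For axiom $(\mathrm{b_1})$ the idea is to identify $\E(A_{m,n})$ with the subgroup of squares of $\mathbb{F}_q^*$. Since $\gcd(m,n)=1$, the cyclic subgroups $\langle\epsilon\rangle$ and $\langle\xi\rangle$ intersect trivially, so $H:=\langle\epsilon,\xi\rangle$ has order $mn$ and the map $(i,j)\mapsto\epsilon^{i-1}\xi^{j-1}$ is a bijection from $[1,m]\times[1,n]$ onto $H$; in particular $\E(A_{m,n})=H$. Because $|H|=mn$ is odd, $H$ cannot contain the unique element of order $2$ in $\mathbb{F}_q^*$, namely $-1$. As $H$ has index $2$ in the cyclic group $\mathbb{F}_q^*$, I conclude $\mathbb{F}_q^*=H\sqcup(-H)$, which is precisely $(\mathrm{b_1})$.

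The row and column sums vanish by a geometric-series computation: the $i$-th row sums to $\epsilon^{i-1}\sum_{l=0}^{n-1}\xi^l=0$, since $\xi\neq 1$ and $\xi^n=1$, and symmetrically the $j$-th column sums to $\xi^{j-1}\sum_{l=0}^{m-1}\epsilon^l=0$. Global simplicity is equally direct: reading row $i$ from left to right, the $j$-th partial sum equals $\epsilon^{i-1}(\xi^j-1)/(\xi-1)$, and two such partial sums coincide for $1\leq b<c\leq n$ only if $\xi^b=\xi^c$, contradicting $|\xi|=n$. The column case is analogous with the roles of $\epsilon$ and $\xi$ swapped.

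The only step requiring real input is the identification of $\E(A_{m,n})$ with the unique index-$2$ subgroup of $\mathbb{F}_q^*$; everything else becomes formal once $\xi$ and $\epsilon$ are taken to be primitive $n$-th and $m$-th roots in $\mathbb{F}_q$, which exist because $m,n$ both divide $q-1=2mn$. I therefore expect the coprimality-plus-odd-order argument, combined with the fact that $-1$ is not a square in $\mathbb{F}_q^*$, to be the conceptual heart of the proof; the remaining verifications are routine.
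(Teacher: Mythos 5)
The paper does not prove this statement: it is imported verbatim from Buratti's paper \cite{B} and used as a black box, so there is no in-paper proof to compare against. Your verification is correct and complete: the bijection $(i,j)\mapsto\epsilon^{i-1}\xi^{j-1}$ onto the order-$mn$ subgroup $H=\langle\epsilon,\xi\rangle$ (using $\gcd(m,n)=1$), the observation that $-1\notin H$ because $|H|=mn$ is odd so that $\mathbb{F}_q^*=H\sqcup(-H)$, the geometric-series computation for the zero row and column sums, and the distinctness of the partial sums $\epsilon^{i-1}(\xi^j-1)/(\xi-1)$ all go through; the only implicit assumption is the standard one that $m,n\geq 3$ (so that $\epsilon\neq 1$ and $\xi\neq 1$), which is forced anyway for a Heffter array with vanishing line sums. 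This is, as far as one can tell, essentially the argument of \cite{B}.
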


Buratti also noticed that these arrays have many symmetries (the so-called automorphisms of the array). Our goal is now to consider a family of embeddings that arise from these arrays and to prove that also these embeddings have a lot of symmetries. In particular, we will see that the full automorphism groups of these embeddings are the largest possible.

First of all, in order to define the Archdeacon embeddings induced by such arrays, we need to recall the following result of \cite{DM} (see also \cite{CDP}) about compatible orderings of totally filled $m\times n$ arrays.
\begin{prop}\label{prop:orderings}
Let $A$ be a totally filled $m \times n$ array, where at least one between $m$ and $n$ is odd, and let $\ell$ be such that $m-2\ell$ and $n$ are coprime. Let $\omega_{C_i}$ be the natural ordering from top to bottom for every $i \in \{1,\dotsc, n\}$, and let $\omega_{R_j}$ be the natural ordering from left to right for $j \in \{1, \dotsc, m-\ell\}$, and from right to left otherwise. Then the orderings $\omega_c$ and $\omega_r$ are compatible.
\end{prop}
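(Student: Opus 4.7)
The plan is to identify each filled cell of $A$ with its coordinate pair $(i,j)$, where $i$ is read modulo $m$ and $j$ modulo $n$, view $\omega_r$ and $\omega_c$ as permutations of this set of $mn$ cells, and analyze the orbit structure of $\omega_c\circ\omega_r$ directly by tracing the orbit of $(1,1)$. Under this identification the column ordering becomes $\omega_c(i,j)=(i+1,j)$, while the row ordering becomes $\omega_r(i,j)=(i,j+1)$ when $i\leq m-\ell$ and $\omega_r(i,j)=(i,j-1)$ when $i>m-\ell$, so that
\[
(\omega_c\circ\omega_r)(i,j)=\begin{cases}(i+1,\,j+1)&\text{if }i\leq m-\ell,\\ (i+1,\,j-1)&\text{if }i>m-\ell.\end{cases}
\]

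Next I would carry out one full loop around the rows. Starting from $(1,1)$ and iterating, the first coordinate simply counts $1,2,\ldots,m$ and cycles back, while the second coordinate gains $+1$ for each of the first $m-\ell$ applications and then $-1$ for each of the next $\ell$ applications. Hence after exactly $m$ applications of $\omega_c\circ\omega_r$ we are back in row $1$ with the column coordinate shifted by the net amount $(m-\ell)-\ell=m-2\ell$ modulo $n$. A straightforward induction on $k$ then yields $(\omega_c\circ\omega_r)^{km}(1,1)=\bigl(1,\,1+k(m-2\ell)\bmod n\bigr)$.

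To close, I would invoke the hypothesis $\gcd(m-2\ell,n)=1$. The relation $k(m-2\ell)\equiv 0\pmod n$ forces $n\mid k$, so the orbit of $(1,1)$ under $\omega_c\circ\omega_r$ has length exactly $mn=|\E(A)|$. Since a permutation of a set of size $mn$ possessing an orbit of that size must be a single cycle of that order, this is precisely the compatibility property demanded in Definition \ref{Compatible}.

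The main obstacle is purely bookkeeping: keeping the modular arithmetic for rows and columns straight as the orbit crosses the boundary between left-to-right and right-to-left rows, and as it wraps from row $m$ back to row $1$. Once the two-case description of $\omega_c\circ\omega_r$ is pinned down, everything collapses to the one-line gcd computation, and the hypothesis that at least one of $m$ and $n$ is odd enters only through ensuring that a suitable value of $\ell\in\{0,\ldots,m\}$ exists in the first place (if both were even, $m-2\ell$ and $n$ would share the factor $2$).
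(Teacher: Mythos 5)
Your proof is correct and complete: the two-case description of $\omega_c\circ\omega_r$ as a ``diagonal'' move on the cell torus, the net column displacement of $m-2\ell$ per full pass through the rows, and the gcd argument together show the orbit of $(1,1)$ has length $mn$, which is exactly the compatibility condition of Definition \ref{Compatible}. The paper itself states this proposition without proof, citing \cite{DM} and \cite{CDP}, and your argument is essentially the standard one given there (the ``knight's tour on a toroidal board'' computation), so there is nothing to add.
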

This means that, if $m$ and $n$ are coprime, we can choose $\ell=0$, obtaining that:
\begin{thm}\label{thm:pos_cyclic}
Let $q= 2mn+1$ be a prime power, with $m,n$ odd and coprime. Let $\omega_c$ and $\omega_r$ be the natural orderings from top to bottom and from left to right, and let $A_{m,n}$ be the array defined in Theorem \ref{B1}.

Then $\omega_r$ and $\omega_c$ are simple and compatible orderings of $A_{m,n}$, and they induce an Archdeacon embedding $\Pi_{m,n}$ whose faces are simple cycles of length $m$ and $n$.
\end{thm}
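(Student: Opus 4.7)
The plan is to assemble three results already available in the excerpt and to check that their hypotheses match. The statement bundles together three claims: that the natural orderings $\omega_r$ and $\omega_c$ are simple, that they are compatible, and that the resulting Archdeacon embedding has simple faces of lengths $m$ and $n$. Each claim corresponds exactly to one of the preceding results, so the task is essentially verification.

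First, for simplicity I would appeal to Theorem \ref{B1}: Buratti's theorem states that $A_{m,n}$ is a globally simple $H(m,n)$ over $\mathbb{F}_q$, and by the definition given just before the refined Theorem \ref{HeffterBiemb}, ``globally simple'' means precisely that the natural top-to-bottom column orderings and left-to-right row orderings are simple. These are exactly $\omega_c$ and $\omega_r$, so simplicity is immediate.

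Second, to obtain compatibility I would invoke Proposition \ref{prop:orderings} with the choice $\ell = 0$. The hypothesis ``at least one of $m,n$ is odd'' holds (both are), and the coprimality condition ``$m - 2\ell$ and $n$ are coprime'' reduces to $\gcd(m,n) = 1$, which is given. With $\ell = 0$ the range $\{1,\ldots, m-\ell\}$ equals $\{1,\ldots,m\}$, so every row is read left-to-right (matching $\omega_r$) and every column top-to-bottom (matching $\omega_c$). The proposition then yields compatibility.

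Finally, simplicity and compatibility let me feed $(A_{m,n},\omega_r,\omega_c)$ into the refined version of Theorem \ref{HeffterBiemb}. Since $A_{m,n}$ is totally filled, it is an $H(m,n;n,m)$ over $\mathbb{F}_q$, matching the convention after Definition \ref{def:H} that $h=n$ and $k=m$ whenever $m=k$ and $n=h$. The theorem then produces an Archdeacon embedding $\Pi_{m,n}$ of $K_{2mn+1} = K_q$ in which every edge lies on one simple $n$-cycle face and one simple $m$-cycle face, which is exactly the claim. I do not foresee any substantial obstacle: the result is essentially a bookkeeping corollary of the three cited statements, and the only subtlety is the parameter identification ($h=n$, $k=m$) that governs the final cycle lengths.
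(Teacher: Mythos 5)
Your proposal is correct and matches the paper's own (implicit) argument: the theorem is presented there as an immediate consequence of Theorem \ref{B1} (global simplicity gives simplicity of the natural orderings), Proposition \ref{prop:orderings} with $\ell=0$ (compatibility, using $\gcd(m,n)=1$), and the refined Theorem \ref{HeffterBiemb} (simple faces of lengths $h=n$ and $k=m$). Your parameter bookkeeping is also the right one.
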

Now we want to investigate the automorphism group of such an embedding.
We begin by considering the orientation-preserving automorphisms.
\begin{prop}
Let $q= 2mn+1$ be a prime power, with $m,n$ odd and coprime, and let $\Pi_{m,n}$ be the Archdeacon automorphism defined in Theorem \ref{thm:pos_cyclic}. Then we have that:
$$Aut_0^+(\Pi_{m,n}) \cong \mathbb{Z}_{mn}.$$
\end{prop}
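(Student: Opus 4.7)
The plan is to produce $mn$ explicit automorphisms of $\Pi_{m,n}$ fixing $0$, and then use Remark~\ref{rem:group} together with a face-length invariant to show there are no more.

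First I would describe the relevant combinatorics of $A_{m,n}$. Since $\gcd(m,n)=1$ and $\epsilon,\xi$ have coprime orders $m$ and $n$, the subgroup $\langle\epsilon,\xi\rangle\leq\mathbb{F}_q^*$ has order $mn$, and since $|\mathbb{F}_q^*|=2mn$ this forces $\E(A_{m,n})=\langle\epsilon,\xi\rangle=(\mathbb{F}_q^*)^2$. Reading off the array, $\omega_r$ acts on $\E(A)$ as multiplication by $\xi$ and $\omega_c$ as multiplication by $\epsilon$. Moreover, $q=2mn+1\equiv 3\pmod 4$ (as $mn$ is odd), so $-1$ is a non-square and $\E(A)\sqcup(-\E(A))$ is exactly the square/non-square decomposition of $\mathbb{F}_q^*$.

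For the lower bound I would check that, for every $s\in(\mathbb{F}_q^*)^2$, the multiplication map $x\mapsto sx$ lies in $Aut_0^+(\Pi_{m,n})$. Being an additive automorphism of $\mathbb{F}_q$ fixing $0$, such a map lifts to an embedding automorphism precisely when $s\rho_0(a)=\rho_0(sa)$ for every $a\in\mathbb{F}_q^*$; using (\ref{ArchEmb}) together with the fact that multiplication by a square preserves both $\E(A)$ and $-\E(A)$, this check reduces to the trivial identities $s(-\xi a)=-\xi(sa)$ and $s(-\epsilon a)=-\epsilon(sa)$. One thereby obtains $mn$ pairwise distinct elements of $Aut_0^+(\Pi_{m,n})$.

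For the upper bound I would apply Remark~\ref{rem:group} to conclude that $Aut_0^+(\Pi_{m,n})$ embeds into the cyclic group $\langle\rho_0\rangle\cong\Z_{2mn}$, hence is cyclic of order dividing $2mn$. A direct computation shows that $\rho_0$ alternates between $\E(A)$ and $-\E(A)$: it sends $a\in\E(A)$ to $-\xi a$ (a non-square, since $-\xi$ is) and $a\in -\E(A)$ to $-\epsilon a$ (a square). Therefore $\rho_0^\ell$ preserves squares if and only if $\ell$ is even. The decisive step is to show that every $\sigma\in Aut_0^+(\Pi_{m,n})$ sends squares to squares, which I would read off from a face-length invariant: by (\ref{F1})--(\ref{F2}), the face containing the directed edge $(0,a)$ has length $m$ when $a\in\E(A)$ and length $n$ when $a\in -\E(A)$. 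Since an orientation-preserving embedding automorphism preserves face lengths, and since $m\neq n$ (they are coprime odd integers, each at least $3$ by the sum condition of Definition~\ref{def:H}), this forces $\sigma(\E(A))=\E(A)$. Hence $\sigma|_{\mathbb{F}_q^*}=\rho_0^{2\ell}$ for some $\ell$, giving $|Aut_0^+(\Pi_{m,n})|\leq mn$ and therefore $Aut_0^+(\Pi_{m,n})\cong\Z_{mn}$.

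I expect the main obstacle to be this last step. Remark~\ref{rem:group} and the lower bound already narrow $|Aut_0^+(\Pi_{m,n})|$ to either $mn$ or $2mn$, so what remains is to rule out the larger value, which is equivalent to showing that $\rho_0$ itself does not extend to an embedding automorphism fixing $0$; the face-length parity argument is what cleanly delivers this exclusion.
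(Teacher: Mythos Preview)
Your proposal is correct and follows essentially the same approach as the paper: exhibit the $mn$ multiplication maps by elements of $\E(A)$ as orientation-preserving automorphisms, then use Remark~\ref{rem:group} together with the $m\neq n$ face-length constraint to cap $|Aut_0^+(\Pi_{m,n})|$ at $mn$. Your identification $\E(A)=(\mathbb{F}_q^*)^2$ and the resulting square/non-square alternation of $\rho_0$ is a clean way to phrase what the paper does via cell-index bookkeeping, but the underlying argument is the same.
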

\begin{proof}
We recall that the array $A_{m,n}$ has, in position $(i,j)$, the element $\epsilon^{i-1}\xi^{j-1}$,  where $\xi$ and $\epsilon$ have respectively order $n$ and $m$ in $\mathbb{F}_q^*=\mathbb{F}_q\setminus \{0\}$. Now, given $1\leq i'\leq m$ and $1\leq j'\leq n$, we define the map
$\lambda_{i',j'}: \mathbb{F}_q\rightarrow \mathbb{F}_q$ such that $\lambda_{i',j'}(z)=a_{i',j'} z=(\epsilon^{i'-1}\xi^{j'-1}) z$. Clearly, this map is a graph automorphism of $K_q$ that fixes $0$. In the following, we will denote for simplicity $a_{i',j'}$ by $\eta$.

We want to prove that $\lambda_{i',j'}$ is also an orientation preserving automorphism of $\Pi_{m,n}$, i.e. we need to check that, given $(x,y)\in D(K_q)$, we have:
\begin{equation}\label{eqPreserving}
\lambda_{i',j'}\circ \rho(x,y)=\rho\circ \lambda_{i',j'}(x,y).
\end{equation}

Because of the definition of Archdeacon embedding associated with $\omega_r$ and $\omega_c$, we have that
$$\rho\circ \lambda_{i',j'}(x,y)=\rho(\eta x,\eta y)=\rho_{\eta x}(\eta x,\eta y)=\rho_{\eta x}(\eta x,\eta x+(\eta y-\eta x)).$$
Here we have two cases:
\begin{equation}\label{3casi}\rho\circ \lambda_{i',j'}(x,y)= \begin{cases}
(\eta x,\eta x-\omega_r(\eta y-\eta x))
\mbox{ if } (\eta y-\eta x)\in \E(A);\\
(\eta x,\eta y+\omega_c(\eta x-\eta y))\mbox{ otherwise.}
\end{cases}\end{equation}

Then, by considering the row indexes modulo $m$ and the column indexes modulo $n$, and by recalling that $\eta=a_{i',j'}=\epsilon^{i'-1}\xi^{j'-1}$, we have
\begin{itemize}
\item $\eta a_{i,j}=a_{i'+i-1,j'+j-1}$;
\item $\omega_r(a_{i,j})= a_{i,j+1}$;
\item $\omega_c(a_{i,j})=a_{i+1,j}.$
\end{itemize}
Hence, if $z=a_{i,j}$,
$$\eta \omega_r(z)=\eta \omega_r(a_{i,j})= a_{i'+i-1, j'+j} =\omega_r(a_{i'+i-1,j'+j-1})=\omega_r(\eta z).$$
Reasoning similarly on the columns of $A$, we also have that:
$$\eta \omega_c(z)=\omega_c(\eta z).$$
We now notice that $(y-x)\in \E(A)$ if and only if $(\eta y-\eta x)\in \E(A)$. Therefore, for $(y-x)\in \E(A)$ we have 
$$\lambda_{i',j'}\circ \rho_x(x,x+(y-x))=(\eta x,\eta x-\eta \omega_r(y-x))=(\eta x,\eta x-\omega_r(\eta y-\eta x))$$
and, due to Equation \eqref{3casi},
$$(\eta x,\eta x-\omega_r(\eta y-\eta x))=\rho_{\eta x}(\eta x,\eta x+(\eta y-\eta x))=\rho\circ \lambda_{i',j'}(x,y).$$
Instead, if $(y-x)\not \in \E(A)$, again because of \eqref{3casi}, we derive that
$$\lambda_{i',j' }\circ \rho_x(x,x+(y-x))=(\eta x,\eta x+\eta \omega_c(x-y))=$$
$$(\eta x,\eta x-\omega_c(\eta x-\eta y))
=\rho_{\eta x}(\eta x,\eta x+(\eta y-\eta x))=\rho\circ \lambda_{i',j'}(x,y).$$

Hence, in both cases, Equation (\ref{eqPreserving}) is satisfied and $\lambda_{i',j'}\in Aut_0(\Pi_{m,n})$.

Now, since for every $\eta=a_{i',j'} = \epsilon^{i'-1}\xi^{j'-1}$ the associated graph automorphism $\lambda_{i',j'}$ is an automorphism of the embedding $\Pi_{m,n}$, the automorphism group $Aut_0(\Pi_{m,n})$ contains a subgroup isomorphic to $\Z_{m} \times \Z_{n} \cong \Z_{mn}$ as $m$ and $n$ are coprime.

Now we prove that this is exactly the group of the orientation-preserving automorphisms that fix zero. We first recall from Remark \ref{rem:group} that $Aut_0^+(\Pi_{m,n})$ is isomorphic to a subgroup of the cyclic group of order $2mn$. Moreover, from the definition of the $\Pi$ we deduce that for every $x \in \mathbb{F}_q^*$  the edge $\{0,x\}$ belongs to a face $F_1$, whose length is $m$, and to a face $F_2$, whose length is $n$. Since $m\not=n$, any element $\sigma \in Aut_0^+(\Pi_{m,n})$ must preserve the face-length. This means that $\sigma$ is of the form $\rho_0^{2\ell}$ for some $\ell \in [1,mn]$. Therefore we have at most $mn$ elements in $Aut_0^+(\Pi_{m,n})$.

It follows that $Aut_0^+(\Pi_{m,n})$ is isomorphic to $\Z_{mn}$.
\end{proof}

Now we will show that these embeddings do not admit orientation-reversing automorphisms. We begin by proving a technical lemma:
\begin{lem}\label{lem:fixzero}
Let $\Pi$ be an Archdeacon embedding of $K_{2mn+1}$ such that every edge is on a face whose boundary is an $m$-cycle and on a face whose boundary is a $n$-cycle, where $m\not=n$. Then, any $\sigma\in Aut_0^-(\Pi)$ fixes only the vertex $0$.
\end{lem}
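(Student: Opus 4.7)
The plan is to argue by contradiction. Suppose $\sigma\in Aut_0^-(\Pi)$ fixes some vertex $v\neq 0$; I will derive a contradiction from the hypothesis $m\neq n$ applied to the two faces meeting the edge $\{0,v\}$.

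The first step is face preservation. Since $\sigma$ is an embedding automorphism it permutes the faces of $\Pi$ while preserving their boundary lengths. By hypothesis the edge $\{0,v\}$ lies on a unique $m$-face $F_m$ and a unique $n$-face $F_n$, and since $\sigma(0)=0$ and $\sigma(v)=v$ this edge is itself setwise fixed. Using $m\neq n$, the faces $F_m$ and $F_n$ are therefore individually $\sigma$-invariant.

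The second step is to read off the action on the boundary of $F_m$. Invoking the orientation-reversing identity $\sigma\circ\rho=\rho^{-1}\circ\sigma$ from Definition \ref{DefEmbeddingsIs} together with the standard face-tracing rule for a combinatorial embedding, one checks that when $\sigma$ fixes a face $F$ setwise it acts on the cyclic sequence of vertices of the boundary of $F$ as a reversal. Writing the boundary of $F_m$ as $(w_0,w_1,\dots,w_{m-1})$ with $w_0=0$ and $w_1=v$, this amounts to $\sigma(w_i)=w_{c-i}$ (indices mod $m$) for some integer $c$.

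The contradiction is now immediate. The condition $\sigma(w_0)=w_0$ forces $c\equiv 0\pmod{m}$, whereupon $\sigma(w_1)=w_1$ forces $w_{m-1}=w_1$, which is possible only if $m=2$. However $F_m$ is a simple cycle in the simple graph $K_{2mn+1}$, so $m\geq 3$, contradicting this. Hence no $v\neq 0$ can be fixed.

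The step that I expect to require the most care is the passage from the global identity $\sigma\circ\rho=\rho^{-1}\circ\sigma$ to the local claim that $\sigma$ really reverses the cyclic boundary of every $\sigma$-invariant face. This is essentially the combinatorial reformulation of the topological fact that an orientation-reversing homeomorphism of a surface sends each fixed face to itself with reversed boundary orientation; once that is in hand, the rest reduces to the elementary observation that no reflection of a cycle $C_m$ with $m\geq 3$ can fix two adjacent vertices.
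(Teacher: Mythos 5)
Your argument is correct, but it routes the contradiction through a different cycle than the paper does. The paper's proof works at the vertex $0$: it invokes Remark \ref{rem:group} to write $\sigma(x_j)=x_{\ell-j}$ with respect to the rotation $\rho_0=(x_1,\dots,x_{2mn})$, observes that a fixed vertex $x_i$ forces both faces through $\{0,x_i\}$ to be setwise invariant (by the length argument you also use) and hence pointwise fixed (an automorphism of a cycle fixing two adjacent vertices is the identity), so that $x_{i+1}$ is fixed --- contradicting $\sigma(x_{i+1})=x_{\ell-i-1}=x_{i-1}$. You instead apply the ``reflection of a cycle cannot fix two adjacent vertices'' principle directly to the boundary of the invariant $m$-face, which needs only one of the two faces and does not use Remark \ref{rem:group} at all. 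The price is the deferred claim that an orientation-reversing automorphism acts on the boundary of an invariant face as a reversal: this is true, and follows from conjugating the face-tracing permutation $(x,y)\mapsto\rho(y,x)$ by $\sigma$ using $\sigma\circ\rho=\rho^{-1}\circ\sigma$ (one also needs that the reversed walk of $F_m$ is not itself a face, which holds here because the other face through $\{0,v\}$ has length $n\neq m$), but it is exactly the piece of infrastructure that the paper's Remark \ref{rem:group} supplies in a different form (for $\rho_0$ rather than for face boundaries), so in a full write-up you would have to prove it rather than say ``one checks.'' Both proofs are comparably short; yours is slightly more self-contained geometrically, the paper's leans on an already-stated remark.
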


\begin{proof}
Let $\Pi$, $m$ and $n$ be as in the statement, and choose any $\sigma\in Aut_0^-(\Pi)$. Set $\rho_0=(x_1,x_2,\dots,x_{2mn})$ to be the local rotation around $0$. Then, by Remark \ref{rem:group}, we have that $\sigma(x_j)=x_{\ell-j}$ for some $\ell \in \{1,\dots,2mn\}$.

Assume now that $\sigma $ fixes a vertex $x_i$ for some $i \in \{1,\dotsc, 2mn\}$. Hence, the edge $\{0,x_i\}$ is fixed by the action of $\sigma$. Let $F_1$ and $F_2$ be the faces of length $m$ and $n$ respectively containing $\{0,x_i\}$. Since $F_1$ and $F_2$ have different lengths, it follows that both faces are pointwise fixed by the action of $\sigma$. In particular $\sigma$ fixes $x_{i+1}$. On the other hand we have that $\sigma(x_i)=x_{\ell-i}=x_i$, that implies
$$\sigma(x_{i+1})=x_{\ell-(i+1)}=x_{i-1}\not=x_{i+1}$$
where the last relation holds because $2mn>3$.
It follows that $\sigma$ does not have any fixed point other than $0$.
\end{proof}

\begin{prop}\label{lem:only_plus}
Let $\Pi$ be an Archdeacon embedding of $K_{2mn+1}$ such that every edge is on a face whose boundary is an $m$-cycle and on a face whose boundary is a $n$-cycle, where $m$ and $n$ are distinct odd integers. Then $Aut_0^-(\Pi)$ is empty.
\end{prop}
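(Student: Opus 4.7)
The plan is to derive a contradiction by matching the number of edges fixed by a hypothetical $\sigma\in Aut_0^-(\Pi)$ against the very small number of faces that could contain such edges. Suppose $\sigma\in Aut_0^-(\Pi)$. By Lemma \ref{lem:fixzero}, $\sigma$ fixes only the vertex $0$. Writing the local rotation at $0$ as $\rho_0=(x_1,x_2,\dots,x_{2mn})$, Remark \ref{rem:group} gives $\sigma(x_j)=x_{\ell-j}$ for some $\ell\in\{1,\dots,2mn\}$; squaring this relation shows that $\sigma$ is an involution.

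First I would argue that $\ell$ must be odd: otherwise the congruence $2j\equiv\ell\pmod{2mn}$ admits a solution, producing a non-zero vertex fixed by $\sigma$ and contradicting Lemma \ref{lem:fixzero}. Counting unordered pairs $\{j,k\}$ with $j+k\equiv\ell\pmod{2mn}$ then shows that $\sigma$ fixes exactly $mn$ edges, namely $\{x_j,x_{\ell-j}\}$ for $j=1,\dots,2mn$; none of these edges is incident to $0$.

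Next I would identify the $\sigma$-fixed faces. Since $\sigma$ reverses orientation, a fixed face $F$ is traversed in reverse by $\sigma$, so $\sigma|_F$ is a reflection of an odd cycle of length $m$ or $n$; such a reflection has a unique fixed vertex, which must be $0$. Hence every $\sigma$-fixed face passes through $0$. Modelling the corners between consecutive edges of $\rho_0$, one checks that $\sigma$ sends the corner between $x_j$ and $x_{j+1}$ to the corner between $x_{\ell-j-1}$ and $x_{\ell-j}$, so exactly two corners are fixed (at $j=(\ell-1)/2$ and $j=(\ell-1)/2+mn$). Because face-types alternate around $0$ (each edge lies on one $m$-face and one $n$-face) and $mn$ is odd, these two fixed corners correspond to one fixed $m$-face $F_m$ and one fixed $n$-face $F_n$.

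The final step is to combine the two counts. Any $\sigma$-fixed edge $e$ lies on two faces of different lengths $m$ and $n$; since $\sigma$ preserves face-length and fixes $e$, it must fix both of these faces setwise, forcing $e\in E(F_m)\cap E(F_n)$. But a reflection of an odd cycle fixes exactly one edge, so each of $F_m$ and $F_n$ contributes only one $\sigma$-fixed edge. This yields $mn\leq 1$, contradicting $m,n\geq 3$. The main obstacle is the third paragraph: correctly translating the dihedral action on $\rho_0$ into the list of fixed faces and verifying via the alternation argument that they are of different lengths. Once that is done, the edge-count versus face-contents comparison delivers the contradiction immediately.
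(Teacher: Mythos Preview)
Your argument is correct and reaches the same conclusion, but the route differs from the paper's. After the common opening (Lemma~\ref{lem:fixzero}, Remark~\ref{rem:group}, and the parity of $\ell$), the paper does not count all fixed edges. Instead it isolates the two indices $j,k$ with $\sigma(x_j)=x_{j+1}$ and $\sigma(x_k)=x_{k+1}$, looks at the $m$-face $F_1$ through the edge $\{x_j,x_{j+1}\}$, and argues directly that $F_1$ is $\sigma$-invariant, hence contains $0$, and that the two neighbours of $0$ in $F_1$ are forced to be $x_k,x_{k+1}$; the same reasoning applied to the $n$-face through $\{x_j,x_{j+1}\}$ puts the corner $(x_k,0,x_{k+1})$ in two distinct faces, contradicting $\rho_0(x_{k+1})\neq x_k$. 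A short separate case handles $n=3$.

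Your version replaces this local contradiction with a global count: $mn$ fixed edges versus at most one fixed edge in each of the two fixed faces. The two arguments share the same hinge (the two fixed corners at $0$, which are exactly the paper's swapped pairs $(x_j,x_{j+1})$ and $(x_k,x_{k+1})$), but you exploit them differently. Your approach has the advantage of avoiding the $n=3$ case split; the paper's is a bit shorter because it never needs to enumerate all fixed edges or invoke the alternation of face-types around $0$. The step you flagged as the main obstacle (that the two fixed corners lie $mn$ apart and hence, by the alternation of $m$- and $n$-faces around every vertex in a biembedding, have different face-types) is indeed the crux, and it is sound since $mn$ is odd.
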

\begin{proof}
Let $\Pi$, $m$ and $n$ be as in the statement, and let $\sigma$ be an automorphism in $Aut_0^-(\Pi)$. By Lemma \ref{lem:fixzero}, it follows that $\sigma$ has no fixed vertices other than $0$, hence by Remark \ref{rem:group} we have $\sigma(x_i)=x_{\ell-i}$ for some $\ell \in \{1,\dots,2mn\}$. Here $\ell$ must be odd, since otherwise we would have $i$ such that $\ell-i \equiv i \pmod{2mn}$, and hence $\sigma(x_i)=x_{\ell-i}=x_i$. Given an odd $\ell \in \{1,\dots,2mn\}$, the equation $\ell-i\equiv i+1\pmod{2mn}$ has two solutions, thus there exist exactly two indexes $j$ and $k$ such that $\sigma(x_{j}) = x_{j+1}$, $\sigma(x_{j+1}) = x_{j}$ and $\sigma(x_{k}) = x_{k+1}$, $\sigma(x_{k+1}) = x_{k}$.

Now we assume, without loss of generality, that $m>n$. We also assume that $n>3$ (the case where $n=3$ will be considered later).

Let $F_1$ be the face of length $m$ containing the edge $\{x_{j}, x_{j+1}\}$. Since $\sigma$ exchanges $x_{j}$ and $x_{j+1}$, and $F_1$ is the unique face of length $m$ containing these vertices, we have that $F_1$ is fixed by the action of $\sigma$. Now, as $m$ is odd, exactly one vertex of $F_1$ is fixed by $\sigma$, and from Lemma \ref{lem:fixzero} we deduce that this vertex is $0$, hence $0 \in F_1$.
Moreover, the vertices that are adjacent to $0$ in $F_1$ are exchanged by the action of $\sigma$, and, since $m>3$ and $F_1$ is simple, they must be $x_{k}$ and $x_{k+1}$. Similarly, if $F_2$ is the face of length $n>3$ containing the edge $\{x_{j}, x_{j+1}\}$, we obtain again that $0 \in F_2$ and that $x_{k}$ and $x_{k+1}$ are adjacent to $0$ in $F_2$ as well. We then gain a contradiction by noticing that $\rho_0(x_{k+1})\not=x_k$ and hence the path $(x_{k}, 0, x_{k+1})$ can not be contained in two different faces.

Finally, let us suppose $n=3$. In this case, since $m>n$, we still have that $(x_{k}, 0, x_{k+1})$ belongs to a face $F_1$ of length $m$. Here we have that, considering the face $F_2$ of length $n=3$ that contains the edge $\{x_k,x_{k+1}\}$, $\sigma$ must exchange $x_k$ and $x_{k+1}$, and thus it fixes the third point of $F_2$ that must be $0$. But this means that $(x_{k}, 0, x_{k+1})$ also belongs to a face $F_2$ of length $n=3$. Since this path can not be contained in two different faces we obtain a contradiction also in this case.

It follows that $Aut_0^-(\Pi)$ is empty.
\end{proof}
\begin{rem}\label{uppersize}
For every Archdeacon embedding $\Pi$ and integers $m,\ n$ that satisfy the hypothesis of Proposition \ref{lem:only_plus}, the following holds:
$$|Aut_0(\Pi)|=|Aut_0^+(\Pi)|\leq mn.$$
Indeed any automorphism that fixes zero is in $Aut_0^+(\Pi)$ and, since each edge belongs to two faces of different lengths, any element of $Aut_0^+(\Pi)$ is of the form $\rho_0^{2\ell}$ with $\ell \in \{1,\dots,mn\}$.
\end{rem}
We are then able to derive the following result, that exactly determines the size of the full automorphism group of this class of Archdeacon embedding.
\begin{thm}\label{thm:pos_cyclic2}
Let $q= 2mn+1$ be a prime power, with $m,n$ odd and coprime. Let $\omega_c$ and $\omega_r$ be the natural orderings from top to bottom and from left to right and let  $A_{m,n}$ be the array defined in Theorem \ref{B1}. Then, the Archdeacon embedding $\Pi_{m,n}$ induced by $\omega_r, \ \omega_c$ and $A_{m,n}$ is such that:
$$Aut_0(\Pi_{m,n}) \cong \Z_{mn}\mbox{ and } |Aut(\Pi_{m,n})|={2mn+1 \choose 2}.$$
Moreover, the faces of $\Pi_{m,n}$ are simple cycles of length $m$ and $n$.
\end{thm}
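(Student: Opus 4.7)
The plan is to assemble three results already established in the paper; no new combinatorial work is needed at the level of this theorem. First, the claim that the faces of $\Pi_{m,n}$ are simple cycles of length $m$ and $n$ is already the content of Theorem~\ref{thm:pos_cyclic}, so this part of the conclusion requires no further argument.

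For the computation of $Aut_0(\Pi_{m,n})$, I would combine the two preceding statements on orientation-preserving and orientation-reversing automorphisms. The proposition immediately before the theorem yields $Aut_0^+(\Pi_{m,n}) \cong \Z_{mn}$, generated by the dilations $\lambda_{i',j'}$ together with the upper bound coming from Remark~\ref{rem:group}. Since $m$ and $n$ are coprime odd integers, they are distinct (barring the trivial degeneration $m=n=1$, for which $q=3$ and the claim is vacuous), so the hypotheses of Proposition~\ref{lem:only_plus} are met and $Aut_0^-(\Pi_{m,n}) = \emptyset$. Hence
\[
Aut_0(\Pi_{m,n}) \;=\; Aut_0^+(\Pi_{m,n}) \;\cong\; \Z_{mn}.
\]

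For the size of the full automorphism group, Theorem~\ref{HeffterBiemb} tells us that $\mathbb{F}_q$ acts sharply transitively on the vertex set of $K_q$. Orbit-stabilizer, applied with stabilizer $Aut_0(\Pi_{m,n})$ and orbit the whole vertex set, then gives
\[
|Aut(\Pi_{m,n})| \;=\; |\mathbb{F}_q|\cdot|Aut_0(\Pi_{m,n})| \;=\; (2mn+1)\cdot mn \;=\; \binom{2mn+1}{2}.
\]

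The main obstacle has in fact already been overcome before we reach this theorem: the nontrivial content lies in producing the $mn$ dilation automorphisms $\lambda_{i',j'}$ in the preceding proposition and in excluding orientation-reversing stabilizers of $0$ via Lemma~\ref{lem:fixzero} and Proposition~\ref{lem:only_plus}. At the level of the present statement the argument is merely orbit-stabilizer bookkeeping plus a check that the hypotheses of Proposition~\ref{lem:only_plus} apply, which amounts to observing that coprime odd integers are distinct in all nontrivial cases.
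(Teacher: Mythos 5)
Your proposal is correct and follows exactly the same route as the paper, whose proof of this theorem is simply the observation that it follows from Theorem~\ref{thm:pos_cyclic}, the preceding proposition on $Aut_0^+(\Pi_{m,n})$, and Proposition~\ref{lem:only_plus}; your explicit orbit--stabilizer computation $|Aut(\Pi_{m,n})|=q\cdot|Aut_0(\Pi_{m,n})|=(2mn+1)\,mn=\binom{2mn+1}{2}$ is the intended (and correct) bookkeeping left implicit there.
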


\begin{proof}
The statement follows from Theorem \ref{thm:pos_cyclic} and Lemma \ref{lem:only_plus}.
\end{proof}
\begin{rem}
This Theorem shows that an Archdeacon embedding $\Pi$ over $\mathbb{F}_q$ can have a group $Aut_0(\Pi)$ whose size reaches the upper bound $mn=\frac{q-1}{2}$ of Remark \ref{uppersize}, and we note that this upper bound can be chosen to be arbitrarily large.
Indeed, the embeddings considered here have been obtained using Heffter arrays over the group $\mathbb{F}_q$, but if $q=p$ is a simple prime, they can be obtained using classical Heffter arrays (over $\Z_p$). This happens infinitely many times since there are infinite primes $p$ of the form $p=2mn+1$ where $m,n$ are odd and coprime.

To prove this statement, it suffices to fix $n=3$ and look for primes $p$ of the form $6m+1$ with $m$ coprime with $3$, that is $m\equiv 1,2 \pmod{3}$. This is equivalent to look for $p \equiv 7,13 \pmod{18}$, and it is well known that there are infinitely many primes in these congruence classes.
\end{rem}
In a very recent paper, Buratti presented a construction of a 
rank-one $H(m,n)$ for a wide spectrum of prime powers $q=2mn+1$, which he plans to complete in a future paper. He found Heffter arrays with a large group of symmetries (\textit{multipliers}), and many of these arrays can be used to construct Archdeacon embeddings. Here, we focused on the case where $m$ and $n$ are odd and coprime because, under this assumption, the array has  group  of multipliers of maximum order (see Proposition 4.3 of \cite{B}), and we obtain an Archdeacon embedding whose automorphism group is the largest possible.

We conclude the paper by discussing the Archdeacon embedding (and its automorphisms) associated to the rank-one Heffter array of Example 3.3 of \cite{B}.
\begin{ex}
Consider the pair $(m,n) = (3,5)$. These two numbers are coprime and odd, and since $q = 2mn+1 = 31$ is  prime, we can apply Theorem \ref{thm:pos_cyclic} and construct the following $H(3,5)$:
\[
A=\begin{array}{|r|r|r|r|r|}\hline
1 & 2& 4 & 8 & 16 \\ \hline
5 & 10 & 20 & 9 & 18 \\ \hline
25 & 19 & 7 & 14 & 28 \\ \hline
\end{array}
\]
where $X = (1,2,4,8,16)$ and $Y =(1, 5, 25)$ are the subgroups of $\mathbb{Z}_{31}$ of order $5 $ and $3$, respectively. Since $3$ and $5$ are coprime, we can consider the natural orderings of each row from left to right, and of each column from top to bottom. From these orderings and the array $A$ we can then construct an Archdeacon embedding $\Pi_{3,5}$ of the complete graph $K_{31}$, whose vertices are identified with the elements of $\Z_{31}$.

Starting from the cell filled with the element $1$, we can write the rotation:
\[
\begin{aligned}
\rho_0 = \, &(1, -2, 10, -20, 7, -14, 8, -16, 18, -5, 25, -19, 2, -4, 20, -9, \\&14, -28, 16, -1, 5, -10, 19, -7, 4, -8, 9, -18, 28, -25).
\end{aligned}
\]
What can then be noticed is that $\rho_0$ is invariant under conjugation by $\lambda_{\eta}$ where $\lambda_{\eta}$ is the multiplication by an element $\eta\in \mathbb{Z}_{31}$ that is contained in $A$. Let us, for example, consider $\eta=9$ and note that $9\in A$ and let us define $\lambda_\eta(x) = \eta x$. Then, if $\rho_0 = (x_1,\dotsc, x_{2mn})$, with $x_1 = 1$, it holds:
\[
\begin{aligned}
\lambda_\eta \circ \rho_0 =\, & (1, -18, 4, -10, 16, -9, 2, -5, 8, -20)(-2, 28, -8, 19, -1, 14, -4, 25, -16, 7)\\& (5, -28, 20, -19, 18 , -14, 10, -25, 9, -7)= \rho_0\circ \lambda_\eta.
\end{aligned}
\]
That implies:
\[
\lambda_\eta \circ\rho_0(x) = \rho_0\circ \lambda_\eta(x).
\]
and hence we can see that $\lambda_{\eta}$ is an automorphism since
\[
\lambda_\eta \circ\rho(x,y) =\lambda_\eta \circ\rho(x,x+(y-x)) = (\eta x,\eta x+\eta\rho_0(y-x))=\]
\[
(\eta x,\eta x+\rho_0(\eta y-\eta x))=\rho\circ \lambda_\eta(x,y).
\]
The action of $\eta$ can be also seen directly on every cell of the array $A$. Let $a_{i,j}$ denote the element of $A$ in the $(i,j)$-th cell, and assume that $\eta = a_{i',j'}$ for some pair $(i',j')$. Then:
\[
\eta a_{i,j} = a_{i'+i-1, j'+j-1},
\]
where the row and column indexes are viewed modulo $m$ and $n$ respectively. Let then $\eta A_{3,5}$ denote the array whose $(i,j)$-th cell is filled by $\eta a_{i,j}$.

Then, for $\eta = a_{i',j'}$, and for any face $F$ given by, say, the $j$-th column of $A_{3,5}$, we have that $\eta F$ is one of the faces obtained from the $j$-th column of $\eta A_{3,5}$. Moreover, this is also a face $F'$ obtained from the $(j+j'-1)$-th column of $A_{3,5}$ consistently with the fact that $\lambda_\eta$ maps faces into faces.

Here, setting again $\eta = 9 = 5^1 2^3$, we can see that:
\[
\eta \cdot \begin{array}{|r|r|r|r|r|}\hline
1 & 2& 4 & 8 & 16 \\ \hline
5 & 10 & 20 & 9 & 18 \\ \hline
25 & 19 & 7 & 14 & 28 \\ \hline
\end{array} =
\begin{array}{|r|r|r|r|r|}\hline
9 & 18& 5 & 10 & 20 \\ \hline
14 & 28 & 25 & 19 & 7 \\ \hline
8 & 16 & 1 & 2 & 4 \\ \hline
\end{array}.
\]
Now, as an example, we verify that a face of $\Pi_{3,5}$ is mapped into another face by the action of $\eta=5^1 2^3$.
For instance, we pick $F_1 = (0,2,12)$ that is obtained from the second column of $A_{3,5}$. Then we consider $\eta F_1=(0,18,15)$, that is the development of the second column of $\eta A_{3,5}$. We conclude by recognising that $\eta F_1$ is also obtained by translating the development of the fifth column of $A_{3,5}$, indeed
$$\eta F_1=(0,18,15)=(16,3,0)+15 $$
and $(0,16,3)$ is the face obtained as the development of the fifth column.
\end{ex}

\section*{Acknowledgements}
The authors were partially supported by INdAM--GNSAGA.


\begin{thebibliography}{50}

\bibitem{A} D.S. Archdeacon,
\textit{Heffter arrays and biembedding graphs on surfaces},
Electron. J. Combin. \textbf{22} (2015) \#P1.74.

\bibitem{B} M. Buratti, \textit{Tight Heffter arrays from finite fields}, preprint available at https://arxiv.org/abs/2210.16672.

\bibitem{BCDY} K. Burrage, N.J. Cavenagh, D. Donovan \and E.\c{S}. Yaz\i c\i,
\textit{Globally simple Heffter arrays $H(n;k)$ when $k\equiv 0,3 \pmod{4}$},
Discrete Math. \textbf{343} (2020), 111787.

\bibitem{CDY} N.J. Cavenagh, D. Donovan \and E.\c{S}. Yaz\i c\i,
\textit{Biembeddings of cycle systems using integer Heffter arrays},
J. Combin. Des. \textbf{28} (2020), 900--922.
\bibitem{Costa} S. Costa, \textit{Biembeddings of Archdeacon type: their full automorphism group and
their number}, preprint available at https://arxiv.org/abs/2205.02066.

\bibitem{CDP} S. Costa, M. Dalai \and A. Pasotti,
\textit{A tour problem on a toroidal board},
Austral. J. Combin., \textbf{76} (2020), 183--207.

\bibitem{CostaDellaFiorePasotti} S. Costa, S. Della Fiore \and A. Pasotti, \textit{Non-zero sum Heffter arrays and their applications}, Disc. Math. \textbf{345} (2022), 112952.

\bibitem{CMPPHeffter} S. Costa, F. Morini, A. Pasotti \and M.A. Pellegrini,
\textit{Globally simple Heffter arrays and orthogonal cyclic cycle decompositions},
Austral. J. Combin. \textbf{72} (2018), 549--593.

\bibitem{CFP} S. Costa, G. Falcone \and M. Pavone, \textit{Toroidal triangulations with many simmetries}, in preparation.

\bibitem{CPPBiembeddings} S. Costa, A. Pasotti \and M.A. Pellegrini,
\textit{Relative Heffter arrays and biembeddings},
Ars Math. Contemp. \textbf{18} (2020), 241--271.

\bibitem{DM} J.H. Dinitz \and A.R.W. Mattern,
\textit{Biembedding Steiner triple systems and $n$-cycle systems on orientable surfaces},
Austral. J. Combin. \textbf{67} (2017), 327--344.


\bibitem{GG} M. J. Grannell \and T. S. Griggs, \textit{Designs and topology}, Surveys in Combinatorics 2007, London Mathematical
Society Lecture Note Series, 346 (A. Hilton and J. Talbot, eds.), Cambridge University Press, Cambridge (2007), 121--174.

\bibitem{GT} J.L. Gross \and T.W. Tucker,
\textit{Topological Graph Theory},
John Wiley, New York, 1987.

\bibitem{Korzhik}V.P. Korzhik \and H.J. Voss, \textit{On the Number of Nonisomorphic Orientable Regular Embeddings of Complete Graphs}, J. Combin. Theory Ser. B. \textbf{81} (2001), 58--76.

\bibitem{PM} L. ~Mella and A. ~Pasotti, \textit{Tight globally simple non-zero sum Heffter arrays and biembeddings},
J. Combin. Des. \textbf{31} (2022), 41--83.


\bibitem{MT1} L. Mella \and T. Traetta,
\textit{Constructing generalized Archdeacon-array via alternating sign matrices},
in preparation.


\bibitem{Moh} B. Mohar,
\textit{Combinatorial local planarity and the width of graph embeddings},
Canad. J. Math. \textbf{44} (1992), 1272--1288.

\bibitem{MT} B. Mohar \and C. Thomassen,
\textit{Graphs on surfaces},
Johns Hopkins University Press, Baltimore, 2001.


\bibitem{DP} A. Pasotti \and J.H. Dinitz, \textit{A survey of Heffter arrays}, preprint available at https://arxiv.org/abs/2209.13879.


\bibitem{JS} J. Siran,
\textit{Graph Embeddings and Designs},
in: \textit{Handbook of Combinatorial Designs}. Edited by C. J. Colbourn and J. H. Dinitz. Second edition. Discrete
Mathematics and its Applications. Chapman \& Hall/CRC, Boca Raton, 2007.
\end{thebibliography}
\end{document}